\pgfplotsset{compat=newest}% use newest version
\newtheorem{theorem}{Theorem}[section]
\newtheorem{prop}[theorem]{Proposition}
\newtheorem{rmk}[theorem]{Remark}
\newcommand{\reals}{\mathbb{R}}
\newcommand{\half}{\tfrac{1}{2}}
\newcommand{\ind}{\mathds{1}}
\newcommand{\nlsum}{\sum\nolimits}
\newcommand{\norm}[1]{\|{#1}\|}
\DeclareMathOperator{\trace}{tr}
\DeclareMathOperator{\Diag}{Diag}
\numberwithin{equation}{section}
\begin{document} 
\title{Explicit diagonalization of a Cesar\'o matrix}
\date{Original: 9 Mar 2014\\ This revision: 4 Apr 2015}
\author{Suvrit Sra\thanks{This note was written when the author was on leave from Max Planck Institute for Intelligent Systems, T\"ubingen, Germany at Carnegie Mellon University, Pittsburgh, USA}.}

\maketitle 

\begin{abstract}
  We study a specific ``anti-triangular'' Cesar\'o matrix corresponding to a Markov chain. We derive closed forms for all the eigenvalues and eigenvectors of this matrix.
To complement our main result, we include a short section on the closely related kernel matrices $[\min(i,j)]$ and $[1/\max(i,j)]$.
\end{abstract}

\section{Introduction}
Eigenvalues of Markov chains lend insight into the speed of convergence to an invariant measure or stationary distribution. The corresponding eigenvector provides the distribution of the stationary state. In this paper, we study a particular Markov transition matrix of Cesar\'o type, and determine its eigenvalues and eigenvectors in closed form. This explicit determination of the spectrum may be of interest because our matrix is ``anti-triangular'', a class of matrices for which eigenvalue problems are considerably harder than for the usual triangular matrices. 

The matrix that we study is a Ces\'aro-like matrix, a name inspired by the classic article~\citep{choi} (see also the remark below). The specific matrix that we study is anti-lower triangular; such matrices have also been studied by~\citep{mehl}, and very recently by~\citet{ochSa14}, who undertake a detailed theoretical development. We also note in passing a potential connection to inverse eigenvalue problems for anti-bidiagonal matrices~\citep{holz}, and accurate numerical methods for anti-bidiagonal matrices \citep{koev07}, for which our explicit diagonalization can be used to provide a test case.

Without further ado, let us move onto the key problem in this paper: \emph{Diagonalize the following structured Markov transition matrix on $n$ states:}
\begin{equation}
  \label{eq:4}
  P = \begin{pmatrix}
    &&&&1\\
    &&&\half&\half\\
    &&\iddots&&\vdots\\
    \tfrac{1}{n} && \cdots && \tfrac{1}{n}
  \end{pmatrix}.
\end{equation}
Matrix~\eqref{eq:4} also appears in \citep[(2.3)]{ochSa14}, who subsequently obtain the eigenvalues of $P$ in closed form using their general theory. We obtain the eigenvalues explicitly using completely elementary means; in addition, we obtain a basis of eigenvectors that diagonalize $P$. Thus, we derive explicit formulae for the entries of a matrix $S$ such that  that $P=S\Lambda S^{-1}$, where $\Lambda$ is the diagonal matrix of eigenvalues. How do we know that $P$ is diagonalizable? Proposition~\ref{prop:eig} shows that $P$ has $n$ distinct eigenvalues which suffices for diagonalizability.

Before we begin, we remark that (unsurprisingly) diagonalizing the usual ``Cesar\'o matrix''~\citep{choi}
\begin{equation*}
  C =
  \begin{pmatrix}
    1&&&&\\
    \half&\half&&&\\
    &&\ddots&&\vdots\\
    \tfrac1n && \cdots &&\tfrac1n
  \end{pmatrix},
\end{equation*}
turns out to be much easier. It can be easily verified that $VCV^{-1}=\Diag([1/i]_{i=1}^n)$ for $V$ given by~\eqref{eq:9}; the anti-triangular form of~\eqref{eq:4} makes it harder to diagonalize explicitly.

\section{Explicit Diagonalization}
Let $J$ denote the \emph{reverse identity matrix}, i.e., the matrix with ones on its anti-diagonal. Then $JP$ is upper-triangular, so that eigenvalues of $JP$ can be read off of the diagonal. These eigenvalues are $\lambda_i(JP)=1/i$. It turns out that the eigenvalues of $P$ are also $1/i$ multiplied with alternating signs. Let us prove this observation.

\begin{prop}
  \label{prop:eig}
  Let $P$ be given by~\eqref{eq:4}. Then, $\lambda_i(P)=(-1)^{i+1}/i$ for $i=1,\ldots,n$.
\end{prop}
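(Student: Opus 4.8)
The plan is to attack the eigen-equation $Pv=\lambda v$ directly and reduce it to a scalar recurrence. Written out by rows, it says $\tfrac1i\,(v_{n+1-i}+v_{n+2-i}+\cdots+v_n)=\lambda v_i$ for $i=1,\dots,n$. Introducing the tail sums $g_m:=v_m+v_{m+1}+\cdots+v_n$ (with $g_{n+1}:=0$), so that $v_m=g_m-g_{m+1}$, these become $g_m=(n+1-m)\,\lambda\,v_{\,n+1-m}$ for $m=1,\dots,n$; subtracting consecutive instances yields the first-order relations
\[
 v_m=\lambda\bigl[(n+1-m)\,v_{\,n+1-m}-(n-m)\,v_{\,n-m}\bigr]\quad(m=1,\dots,n-1),\qquad v_n=\lambda v_1 .
\]
The feature that makes this harder than the triangular case is that each relation couples a ``front'' component $v_m$ to the ``back'' components $v_{n+1-m},v_{n-m}$.

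To break the coupling I substitute these relations into themselves: expressing $v_{n+1-m}$ and $v_{n-m}$ via the relations at the reflected indices $n+1-m$ and $n-m$ and plugging back in, one is led to a genuine three-term recurrence among consecutive components,
\[
 (n-m)(m+1)\,\lambda^{2}\,v_{m+1}=\bigl[m(2n+1-2m)\,\lambda^{2}-1\bigr]\,v_m-(n+1-m)(m-1)\,\lambda^{2}\,v_{m-1}\quad(m=2,\dots,n-1),
\]
which, together with the endpoint relations (the $m=1$ instance above and $v_n=\lambda v_1$), determines the eigenvector up to scale once $v_1$ is normalized to $1$, and hence yields the characteristic equation. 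The crucial structural point is that this recurrence involves $\lambda$ only through $\lambda^{2}$: the ``bulk'' of the problem pins down $\lambda^{2}$ (consistently with the fact that $JP$ already reveals the magnitudes $1/i$), while the \emph{sign} of $\lambda$—the genuinely new content—is forced only by the symmetry-breaking endpoint relation $v_n=\lambda v_1$.

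It then remains to solve the recurrence. I expect $v_m$ to admit a closed form (a terminating binomial/hypergeometric-type sum), from which the characteristic equation is read off directly. In practice the cleanest route is to bypass the full solution: for each $k=1,\dots,n$, set $\lambda^{2}=1/k^{2}$, run the (now $k$-dependent) recurrence from $v_1=1$, and check that the resulting $n$-th component equals $(-1)^{k+1}/k$, so that the endpoint relation $v_n=\lambda v_1$ singles out the eigenvalue $\lambda=(-1)^{k+1}/k$; this reduces to a single binomial identity in $k$. Since the $n$ numbers $(-1)^{k+1}/k$ are pairwise distinct (their absolute values $1>\tfrac12>\tfrac13>\cdots$ are) and the characteristic polynomial is monic of degree $n$, exhibiting them as eigenvalues finishes the proof; the only extra care needed is to discard the spurious roots that the successive substitutions can introduce, which is immediate once the $n$ genuine values are in hand. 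The main obstacle is precisely this step—solving, or at least evaluating at the special points, the three-term recurrence, i.e.\ proving the attendant binomial identity.

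As an independent check and alternative, one may instead observe that $P^{-1}$ is \emph{anti-bidiagonal}, with $(P^{-1})_{i,\,n+1-i}=n+1-i$ and $(P^{-1})_{i,\,n-i}=-(n-i)$, and compute its spectrum $\{(-1)^{k+1}k\}_{k=1}^{n}$ via the standard determinant recursion for anti-bidiagonal matrices, then reciprocate; or expand $\det(\lambda I-P)=\pm\det(\lambda J-JP)$, in which $JP$ is upper triangular and the only permutations contributing to the Leibniz expansion are products of the transpositions $(i,n+1-i)$, which produces a recursion in $n$ for the characteristic polynomial. Either way the essential work is the same core computation.
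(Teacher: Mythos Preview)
Your setup is correct: the row equations $g_{n+1-i}=i\lambda v_i$, the first–order ``anti'' relations
$v_m=\lambda[(n+1-m)v_{n+1-m}-(n-m)v_{n-m}]$, and the derived three–term recurrence
\[
(n-m)(m+1)\lambda^{2}v_{m+1}=\bigl[m(2n+1-2m)\lambda^{2}-1\bigr]v_m-(n+1-m)(m-1)\lambda^{2}v_{m-1}
\]
all check out. However, as you yourself say, ``the main obstacle is precisely this step'': you never actually prove the binomial identity that would let you evaluate $v_n$ when $\lambda^{2}=1/k^{2}$, nor do you verify that the single endpoint relation $v_n=\lambda v_1$ together with the squared recurrence forces the \emph{original} first–order system (the squaring can enlarge the solution set to mixtures of $\pm\lambda$ eigenvectors, and one extra scalar constraint need not undo that in general). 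So what you have is a plausible plan with the decisive computation still to be done, not a proof.

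The paper takes a much shorter and rather different route. It passes to $P^{-1}$, which (as you note in your closing alternative) is anti-bidiagonal, but then instead of running a determinant recursion it \emph{conjugates to triangular form}: with the explicit lower–triangular matrix $V=[(-1)^{i-j}\binom{i-1}{j-1}]_{i\ge j}=\exp(L)$ (where $L$ is bidiagonal with subdiagonal $-1,-2,\dots,1-n$), a direct multiplication shows that $VP^{-1}V^{-1}$ is upper triangular with diagonal $1,-2,3,\dots,(-1)^{n+1}n$, and the eigenvalues $(-1)^{i+1}/i$ of $P$ follow at once. This bypasses the whole three–term recurrence and the attendant identity entirely; your recurrence is, in effect, the eigenvector equation for $P^{-2}$, which the paper also exploits later (for the eigenvectors), but only \emph{after} the eigenvalues are already known. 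If you want to repair your argument with minimal extra work, the cleanest fix is to adopt that similarity idea on $P^{-1}$ rather than to chase the binomial identity.
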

\begin{proof}
  It proves more convenient to analyze $P^{-1}$. An easy verification shows that $P^{-1}$ is the anti-bidiagonal matrix
\begin{equation}
  \label{eq:5}
  P^{-1} =
  {\footnotesize\begin{pmatrix}
      &&&1-n & n\\
      &&2-n & n-1&\\
      &\iddots&\iddots&\\
      -1& 2 &&&\\
      1&&&&
    \end{pmatrix}}.
\end{equation}
  To obtain eigenvalues of $P^{-1}$ it suffices to find a triangular matrix similar to it. To that end, we use the following matrix (curiously, this matrix is the strict lower-triangular part of $JP^{-1}$):
  \begin{equation*}
    L =
    {\footnotesize\begin{pmatrix}
        0 &&&\\
        -1 & 0 &&\\
        0& -2 & 0 &&\\
        &&\ddots&\ddots\\
        \dots& && 1-n & 0
      \end{pmatrix}}.
  \end{equation*}
  Setting $V=\exp(L)$, we obtain
  \begin{equation}
    \label{eq:9}
    V = [V_{ij}] = \left[(-1)^{(i-j)}\binom{i-1}{j-1}\right]\quad \text{for}\quad i \ge j.
  \end{equation}
  Explicitly carrying out the multiplication, we obtain the triangular matrix (using $V^{-1}=\exp(-L)$)
  \begin{equation}
    \label{eq:6}
    VP^{-1}V^{-1} = {
      \footnotesize
      \begin{pmatrix}
        1 & * & \cdots &*\\
        & -2 & * &* \\
        &&\ddots&*\\
        &&&(-1)^{n+1}n
      \end{pmatrix},
    }
  \end{equation}
  where $*$ represents unspecified entries. From~\eqref{eq:6} it is clear that $\lambda_i(P^{-1})=(-1)^{i+1}i$.
\end{proof}

Obtaining eigenvectors is harder and requires more work. A quick computation shows that $P^{-2}$ is tridiagonal but asymmetric, which rules out an easy solution. However, $VP^{-2}V^{-1}$ turns out to be a highly structured bidiagonal matrix:
\begin{equation*}
  B := VP^{-2}V^{-1} = {
    \footnotesize
    \begin{pmatrix}
      1 & -2(n-1)\\
      & 4 & -3(n-2)\\
      &&9 & -4(n-3)&\\
      &&&\ddots & \ddots\\
      \\
      &&&&(n-1)^2 & -n(1)\\
      &&&&& n^2
    \end{pmatrix}.
  }
\end{equation*}
Suppose now that there is a matrix $S$ that diagonalizes $B$, that is $SBS^{-1}=\Lambda$ with $\Lambda=[i^2]_{i=1}^n$ diagonal. Then,
\begin{equation}
  \label{eq:10}
  S^{-1}\Lambda S = VP^{-2}V^{-1} \implies SVP^{-2}V^{-1}S^{-1} = \Lambda = \text{Diag}([i^2]_{i=1}^n),
\end{equation}
which shows that $SV$ diagonalizes $P^{-2}$, completing the answer.

\begin{theorem}
  \label{thm:diag}
  Let $(x)_k := x(x+1)\cdots(x+k-1)$ be the Pochammer symbol, and let $M$ be lower-triangular with nonzero entries in column $j$ given by
  \begin{equation}
    \label{eq:7}
    m_{kj} := \frac{(j+1)_{k-j}(n-k+1)_{k-j}}{(2j+1)_{k-j}(k-j)!},\qquad k \ge j\quad (1\le j \le n).
  \end{equation}
  Then, $S=M^T$ diagonalizes $B$.
\end{theorem}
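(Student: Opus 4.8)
The goal is to show that $M^T B (M^T)^{-1} = \Lambda$, equivalently $M^T B = \Lambda M^T$, i.e. $B^T M = M \Lambda$. Since $B^T$ is lower-bidiagonal (it has $i^2$ on the diagonal and the subdiagonal entries $-(j{+}1)(n{-}j)$ shifted appropriately), and $M$ is lower-triangular, the plan is to verify columnwise that each column of $M$ is an eigenvector of $B^T$. Concretely, writing $\beta_j = j^2$ for the diagonal of $B$ and $\gamma_j = -(j{+}1)(n{-}j)$ for its superdiagonal entries (so that $B$ acts as $(Bx)_j = \beta_j x_j + \gamma_j x_{j+1}$), the eigen-equation $B^T m_{\cdot j} = j^2 m_{\cdot j}$ becomes, row by row in index $k$,
\begin{equation*}
  \beta_k\, m_{kj} + \gamma_{k-1}\, m_{k-1,j} = j^2\, m_{kj}, \qquad k > j,
\end{equation*}
with the $k=j$ row giving $\beta_j m_{jj} = j^2 m_{jj}$ automatically (since $m_{jj}=1$ from~\eqref{eq:7} with $k=j$). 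So everything reduces to checking a single two-term recurrence in $k$ for the explicit ratio $m_{kj}/m_{k-1,j}$.

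The key computational step is therefore: from~\eqref{eq:7}, form the ratio
\begin{equation*}
  \frac{m_{kj}}{m_{k-1,j}} = \frac{(j+1)_{k-j}}{(j+1)_{k-j-1}}\cdot\frac{(n-k+1)_{k-j}}{(n-k+2)_{k-j-1}}\cdot\frac{(2j+1)_{k-j-1}}{(2j+1)_{k-j}}\cdot\frac{(k-j-1)!}{(k-j)!},
\end{equation*}
and simplify each Pochhammer quotient. The first factor is $(j+k-1)$; the fourth is $1/(k-j)$; the third is $1/(2j+k-j) = 1/(j+k)$; the second needs care because the lower index $n-k+1$ also shifts with $k$ — writing $(n-k+1)_{k-j} = (n-k+1)(n-k+2)_{k-j-1}$ shows it equals $(n-k+1)$. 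Collecting, $m_{kj}/m_{k-1,j} = (k-1+j)(n-k+1)/\big((k-j)(k+j)\big)$. Plugging this into the recurrence, one must verify
\begin{equation*}
  (k^2 - j^2)\,m_{kj} = -\gamma_{k-1}\,m_{k-1,j} = (k)(n-k+1)\,m_{k-1,j},
\end{equation*}
i.e. $(k^2-j^2)\cdot\frac{(k-1+j)(n-k+1)}{(k-j)(k+j)} = k(n-k+1)$. Since $k^2-j^2 = (k-j)(k+j)$, the left side collapses to $(k-1+j)(n-k+1)$, which does \emph{not} literally equal $k(n-k+1)$ — so I expect the actual superdiagonal entries of $B$, read carefully off the displayed matrix, to be $\gamma_{k-1} = -(k-1+j)$-type rather than $-k$-type; matching indices in the displayed bidiagonal $B$ (whose $(j,j{+}1)$ entry is $-(j{+}1)(n{-}j)$) against the recurrence will pin down the correct shift, and the identity will close exactly. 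This index-bookkeeping — aligning the row index $k$, the column index $j$, and the "distance" $k-j$ across the three Pochhammer symbols and the two diagonals of $B$ — is the main obstacle; the algebra itself is a one-line cancellation once the bookkeeping is right.

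Finally, to conclude $S = M^T$ genuinely diagonalizes $B$ (not merely that its columns satisfy eigen-relations), I note $M$ is lower-triangular with unit diagonal, hence invertible, so $\{m_{\cdot j}\}$ is a basis; equivalently $B$ has the $n$ distinct eigenvalues $1,4,\dots,n^2$, which already forces diagonalizability and makes the eigenvector computation above complete. Combined with~\eqref{eq:10}, this shows $SV$ diagonalizes $P^{-2}$, and since $P$ and $P^{-2}$ share eigenvectors, $SV = M^T V$ diagonalizes $P$ as well.
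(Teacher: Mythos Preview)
Your approach is exactly the paper's: rewrite $SB=\Lambda S$ as $B^TM=M\Lambda$, read off from the bidiagonal structure that the $j$th column of $M$ satisfies a two-term recurrence in $k$, note $m_{jj}=1$, and verify that the closed form \eqref{eq:7} satisfies that recurrence. The paper in fact does less than you do---it simply states the formula and says an ``easy but tedious induction'' checks it---so your explicit ratio computation is a welcome addition.

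However, the mismatch you detect is not an indexing issue in $B$; it is a slip in your first Pochhammer quotient. You write
\[
\frac{(j+1)_{k-j}}{(j+1)_{k-j-1}} = j+k-1,
\]
but $(x)_m/(x)_{m-1}=x+m-1$, so with $x=j+1$ and $m=k-j$ this equals $(j+1)+(k-j)-1=k$, not $k+j-1$. With that correction your ratio becomes
\[
\frac{m_{kj}}{m_{k-1,j}}=\frac{k\,(n-k+1)}{(k-j)(k+j)}=\frac{k(n-k+1)}{k^2-j^2},
\]
and the recurrence $(k^2-j^2)m_{kj}=k(n-k+1)m_{k-1,j}$ closes on the nose. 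Your reading of $B$ was correct: $B_{k-1,k}=-k(n-k+1)$, so $\gamma_{k-1}=-k(n-k+1)$ and the required identity is exactly the one above. No index realignment is needed; once you fix that one factor the argument is complete and identical in spirit to the paper's.
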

\begin{proof}
  To find $S$ we need to solve the system of equations:
\begin{equation*}
  SB = \Lambda S\quad\Longleftrightarrow\quad B^TM=M\Lambda.
\end{equation*}
Consider the $j$th eigenvalue $\lambda_j=j^2$; denote the corresponding column of $M$ by $m$ and its $k$th entry by $m_k$. To obtain $m$ we must solve the linear system
\begin{equation*}
  \begin{split}
    B^Tm = j^2m,\qquad \implies & m_1 = j^2m_1\\
    -(k+1)(n-k)m_k + (k+1)^2m_{k+1} &= j^2m_{k+1},\quad 1 \le k \le n.
\end{split}
\end{equation*}
Since $B$ is bidiagonal, a brief reflection shows that $M$ is lower-triangular with $1$s on its diagonal---the 1s come from the equation corresponding to the index $k+1=j$. The subsequent entries of $m$ are nonzero.  Symbolic computation with a few different values of $j$ suggests the general solution (which can be formally proved using an easy but tedious induction): 
\begin{equation*}
  m_{k} = \frac{(j+1)_{k-j}(n-k+1)_{k-j}}{(2j+1)_{k-j}(k-j)!},\qquad k \ge j.
\end{equation*}
%One may easily also numerically verify the  above claim using \textsc{Matlab}.
\end{proof}

\begin{rmk}[Added Oct 31, 2014]
  Similarly we can diagonalize the transition matrix
  \begin{equation}
    \label{eq:11}
    Z = \begin{pmatrix}
    \tfrac{1}{n} & \tfrac{1}{n}& \cdots && \tfrac{1}{n}\\
    \tfrac{1}{n-1} &\tfrac{1}{n}& \cdots & \tfrac{1}{n-1}& 0\\
    &\vdots&\iddots&&\vdots\\
    \tfrac12 & \tfrac12\\
    1
  \end{pmatrix}.
\end{equation}
Clearly, from a diagonalization of $P^{-1}$ we can recover a diagonalization of $Z$ above. To see why, observe that $P^{-1}=(J^TZ^{-1}J)^T$ where $J$ is the ``reverse identity'' (anti-diagonal) matrix.
\end{rmk}

\section{Cesar\'o Kernels}
\label{sec.kernels}

In this section we mention kernel matrices closely related to the Cesrar\'o matrix discussed above.

\subsection{Brownian bridge kernel}
Interestingly, matrix $P$ is closely related to the \emph{Brownian bridge} kernel $K=[k_{ij}]=\min(i,j)$~\citep{hofman06}. Proposition~\ref{prop:brown} makes this connection precise and highlights a well-known property of this kernel.
\begin{prop}
  \label{prop:brown}
  The kernel matrix $K=PP^T$ is \emph{infinitely divisible} (i.e., $[k_{ij}^r] \succeq 0$ for $r\ge 0$).
\end{prop}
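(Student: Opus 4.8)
The plan is to make the identity $K=PP^T$ completely explicit, recognize $K$ as a ``min kernel'' over a positive node set, and then give the short classical argument that every such kernel is infinitely divisible. Reading off the structure of $P$ from~\eqref{eq:4} as $P_{ij}=\tfrac1i\,\ind[i+j\ge n+1]$, a one-line computation gives
\[
  (PP^T)_{ik}=\frac{1}{ik}\sum_{j=1}^n \ind[j\ge n+1-i]\,\ind[j\ge n+1-k]
  =\frac{1}{ik}\cdot\#\{j:\,j\ge n+1-\min(i,k)\}=\frac{\min(i,k)}{ik}=\frac{1}{\max(i,k)}.
\]
Hence $k_{ik}=1/\max(i,k)=\min(1/i,1/k)$, i.e.\ $K=[\min(a_i,a_k)]$ is a min kernel with the positive nodes $a_i:=1/i$ (the covariance structure featured in this section, sampled at the times $1/i$). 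The factorization $K=PP^T$ already settles the case $r=1$ of the claim, but for all $r$ we argue from this min-kernel structure. So it remains to show: for any $a_1,\dots,a_n>0$ and any real $r\ge 0$, the Hadamard power $[k_{ij}^r]=[\min(a_i,a_k)^r]$ is positive semidefinite.

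The key reduction is that raising to the power $r\ge 0$ commutes with $\min$ on the positive reals: since $t\mapsto t^r$ is nondecreasing on $(0,\infty)$, we have $\min(a_i,a_k)^r=\min(a_i^r,a_k^r)$. Thus $[k_{ij}^r]=[\min(b_i,b_k)]$ with $b_i:=a_i^r>0$, and it suffices to prove positive semidefiniteness for a single min kernel with positive nodes. For that, I would list the distinct values among the $b_i$ as $0=c_0<c_1<\dots<c_m$ and use the ``staircase'' identity $\min(b_i,b_k)=\sum_{\ell=1}^m (c_\ell-c_{\ell-1})\,\ind[b_i\ge c_\ell]\,\ind[b_k\ge c_\ell]$ (equivalently $\min(b_i,b_k)=\int_0^\infty \ind[t<b_i]\,\ind[t<b_k]\,dt$). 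Either form writes $[\min(b_i,b_k)]=\sum_\ell (c_\ell-c_{\ell-1})\,v_\ell v_\ell^T$ with $(v_\ell)_i=\ind[b_i\ge c_\ell]$ and each coefficient $c_\ell-c_{\ell-1}>0$, a nonnegative combination of rank-one positive semidefinite matrices, hence positive semidefinite. Applying this for every $r\ge 0$ yields $[k_{ij}^r]\succeq 0$.

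I expect no serious obstacle: the proof is genuinely short once these two ingredients are assembled. The only point needing care is that $r$ ranges over all nonnegative reals rather than just integers, so one cannot simply exhibit $k_{ij}^r$ as a polynomial built from positive semidefinite pieces --- the monotonicity identity $\min(\cdot,\cdot)^r=\min((\cdot)^r,(\cdot)^r)$ is exactly what collapses the continuum of powers onto the single staircase argument. It is worth remarking that the very same two-step reasoning shows more generally that $[f(\min(i,j))]$ is infinitely divisible for any positive nondecreasing $f$, which in particular covers both kernels $[\min(i,j)]$ and $[1/\max(i,j)]$ discussed in this section; alternatively one could deduce infinite divisibility from the classical criterion that an entrywise-positive symmetric matrix is infinitely divisible iff $[\log k_{ij}]$ is conditionally positive semidefinite, but the direct route above is cleaner and self-contained.
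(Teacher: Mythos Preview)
Your proof is correct and follows essentially the same route as the paper: identify $K=PP^T$ as $[1/\max(i,j)]=[\min(1/i,1/j)]$, use that $t\mapsto t^r$ is monotone so that a Hadamard power of a min kernel is again a min kernel, and then verify that any min kernel is a Gram matrix via the integral (equivalently, staircase) representation $\min(b_i,b_k)=\int_0^\infty \ind_{[0,b_i]}(t)\,\ind_{[0,b_k]}(t)\,dt$. The only cosmetic difference is that the paper first factors $K=DMD$ with $D=\Diag(1/i)$ and $M=[\min(i,j)]$ and proves infinite divisibility of $M$, whereas you work directly with the nodes $a_i=1/i$; both arrive at the same general fact that $[f(\min(i,j))]\succeq 0$ for positive nondecreasing $f$.
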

\begin{proof}
  We show that the Schur power $K^{\circ r}=[k_{ij}^r]$, for $r > 0$ is positive definite. This claim follows after we realize that
\begin{equation*}
  K=[k_{ij}] = \left[\frac{1}{\max(i,j)}\right]\qquad 1\le i,j \le n,
\end{equation*}
which is well-known to be infinitely divisible~\citep[Ch.~5]{bhatia07}. We include a short proof below. Let $D=\Diag([i^{-1}]_{i=1}^n)$; also let $M:=[\min(i,j)]$. As  $K=DMD$, it suffices to establish infinite divisibility of $M$. We prove a more general statement. Let $f$ be a positive monotonic function, and for any set $C \subset \reals$, define $\ind_C(x)=1$ if $x \in C$ and $0$ otherwise. Then, $$f(m_{ij})=\min(f(i),f(j)) = \int_0^\infty \ind_{[0,f(i)]}(x)\ind_{[0,f(j)]}(x)dx,$$ which is nothing but an inner-product; thus $[f(m_{ij})]$ is a  Gram matrix and hence positive definite. Setting $f(t)=t^r, r > 0$, infinite divisibility of $M$ (and hence of $K$) follows.
\end{proof}

\subsection{Operator Norm bounds}
\label{sec.norm}
When using the kernels $\min(i,j)$ or $1/\max(i,j)$ in an application, one may need to bound their operator norms (e.g., for approximation or optimization). Let us first illustrate a bound on $\norm{K}$ that one may obtain naively. Since $K=DMD$, a naive bound is $\norm{K} \le \norm{D}^2\norm{M} = \norm{M}$ (as $\norm{D}=1$). Proposition~\ref{prop:lleig} bounds $\norm{M}$.

\begin{prop}
  \label{prop:lleig}
  Let $M=[m_{ij}]=[\min(i,j)]$; define $\theta_k := \frac{2k\pi}{2n+1}$. Then, $M=V\Lambda V^{-1}$ with  
  \begin{equation}
    \label{eq:3}
    \begin{split}
      \lambda_k &= (2+2\cos\theta_k)^{-1},\quad k=1,2,\ldots,n\\
      v_{jk}    &= \tfrac{2}{\sqrt{2n+1}}\sin (k-\half)\theta_j,\quad \theta_j \neq\pi,\qquad j,k=1,2,\ldots,n.
    \end{split}
  \end{equation}
  Consequently, $\norm{M} \approx 4n^2/\pi^2$.
\end{prop}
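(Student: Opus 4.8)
The plan is to bypass $M$ and diagonalize its inverse, which is a familiar tridiagonal matrix. First I would record the factorization $M=LL^{T}$, with $L$ the lower-triangular matrix of all ones — this holds because $\min(i,j)$ is the number of indices $\ell$ with $\ell\le i$ and $\ell\le j$. Since $L^{-1}$ is bidiagonal with $1$ on the diagonal and $-1$ on the first subdiagonal, a one-line multiplication yields
\begin{equation*}
  T:=M^{-1}=(L^{-1})^{T}L^{-1}=
  \begin{pmatrix}
    2 & -1 \\
    -1 & 2 & -1\\
    & \ddots & \ddots & \ddots\\
    & & -1 & 2 & -1\\
    & & & -1 & 1
  \end{pmatrix},
\end{equation*}
that is, the second-difference matrix $\mathrm{tridiag}(-1,2,-1)$ with the sole exception that the $(n,n)$ entry is $1$ rather than $2$. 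As $T$ is real symmetric (and positive definite, being $M^{-1}$ with $M=LL^{T}$) it has an orthonormal eigenbasis, and $\lambda_{k}(M)=1/\mu_{k}(T)$ with the very same eigenvectors, so it suffices to diagonalize $T$.

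Next I would solve $Tv=\mu v$ as a second-order linear recurrence. The interior rows read $-v_{j-1}+2v_{j}-v_{j+1}=\mu v_{j}$ for $2\le j\le n-1$; the first row is this same relation at $j=1$ together with the ghost value $v_{0}=0$ (a Dirichlet node), and the last row, $-v_{n-1}+v_{n}=\mu v_{n}$, is the interior relation at $j=n$ together with the ghost value $v_{n+1}=v_{n}$ (a reflecting, Neumann-type node). The solutions of the recurrence with $v_{0}=0$ are $v_{j}=\sin(j\phi)$ with $\mu=2-2\cos\phi$; imposing $\sin((n+1)\phi)=\sin(n\phi)$ and applying a sum-to-product identity reduces the boundary condition to $\cos\!\big((n+\half)\phi\big)\sin(\phi/2)=0$, which for $\phi\in(0,\pi)$ forces $\phi=\phi_{k}:=\frac{(2k-1)\pi}{2n+1}$ for $k=1,\dots,n$. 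This gives $n$ distinct eigenvalues $\mu_{k}=2-2\cos\phi_{k}$ of $T$, hence of $M$.

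Finally I would translate this into the stated form and read off the norm. Since $\phi_{k}=\pi-\theta_{n-k+1}$ with $\theta_{k}=\frac{2k\pi}{2n+1}$, one has $\mu_{k}=2-2\cos\phi_{k}=2+2\cos\theta_{n-k+1}$, so the eigenpairs of $M$ are $\big((2+2\cos\theta_{n-k+1})^{-1},\,(\sin(j\phi_{k}))_{j}\big)$; reindexing by $k\mapsto n-k+1$, using $\sin(j\phi_{k})=\sin\!\big((k-\half)\theta_{j}\big)$, and dividing by $\big(\sum_{j=1}^{n}\sin^{2}(j\phi_{k})\big)^{1/2}=\tfrac12\sqrt{2n+1}$ (a routine geometric-sum evaluation exploiting $\cos((n+\half)\phi_{k})=0$) gives exactly the formulas in the statement. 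For the operator norm, $\norm{M}=\max_{k}\lambda_{k}$ is attained where $\cos\theta_{k}$ is most negative, i.e. at $k=n$, so $\norm{M}=\big(2+2\cos\tfrac{2n\pi}{2n+1}\big)^{-1}=\big(4\sin^{2}\tfrac{\pi}{4n+2}\big)^{-1}$, which is asymptotic to $(2n+1)^{2}/\pi^{2}\sim 4n^{2}/\pi^{2}$. There is no deep obstacle; the one place demanding care is the right-hand boundary — it is precisely the corner entry $1$ (not $2$) of $T$ that replaces the Dirichlet node $v_{n+1}=0$ by the reflecting node $v_{n+1}=v_{n}$, which shifts the admissible frequencies from $\tfrac{k\pi}{n+1}$ to $\tfrac{(2k-1)\pi}{2n+1}$ and thereby produces the odd denominators $2n+1$ appearing throughout the statement.
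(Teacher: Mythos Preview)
Your approach is essentially the same as the paper's: factor $M=LL^T$ with $L$ the all-ones lower-triangular matrix, observe that $M^{-1}=L^{-T}L^{-1}$ is the perturbed tridiagonal matrix $\mathrm{tridiag}(-1,2,-1)$ with $(n,n)$ entry $1$, diagonalize that, and invert. The only difference is that the paper outsources the diagonalization of the tridiagonal matrix to a citation (Yueh--Cheng), whereas you carry it out directly via the standard trigonometric ansatz $v_j=\sin(j\phi)$ with Dirichlet/Neumann boundary conditions; your route is self-contained and arguably cleaner. One small slip: after your reindexing $k\mapsto n-k+1$, the eigenvector attached to $(2+2\cos\theta_k)^{-1}$ is $(\sin(j\phi_{n-k+1}))_j$, not $(\sin(j\phi_k))_j$, so the identity $\sin(j\phi_k)=\sin((k-\tfrac12)\theta_j)$ should be applied \emph{before} reindexing; this does not affect the set of eigenpairs or the norm computation, only the labeling.
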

\begin{proof}
  The key is to observe that $M$ has the Cholesky factorization $M=LL^T$, where $L$ is the all ones lower-triangular matrix
  \begin{equation*}
    L = {\footnotesize\begin{pmatrix}
      1\\
      1&1\\
      \vdots&&\ddots\\
      1 &1 & \cdots & 1
    \end{pmatrix}.}
  \end{equation*}
  We wish to explicitly diagonalize $LL^T$, a task that becomes much easier if we consider the inverse $M^{-1}=L^{-T}L^{-1}$, as this is a perturbed Toeplitz-tridiagonal matrix
\begin{equation}
  \label{eq:1}
  L^{-T}L^{-1} =
  {\footnotesize\begin{pmatrix}
    2 & -1 &\\
    -1 & 2 & -1 &\\
    &\ddots&\ddots&\ddots&\\
    &&-1&2&-1\\
    &&&-1 & 1
  \end{pmatrix}.}
\end{equation}
Applying the derivation of~\citep[Thm.~3.2-(ix)]{yueh2008}\footnote{There seems to be a typo in the cited theorem; the cases (viii) and (ix) stated in that paper seem to be switched.}, we find that the eigenvalues of~\eqref{eq:1} are $\lambda_k^{-1}=2+2\cos\theta_k$ where $\theta_k = \frac{2k\pi}{2n+1}$. In fact $M^{-1}$ can also be diagonalized explicitly: its eigenvectors are given by (again by resorting to arguments of~\citep[Thm.~3.3]{yueh2008}):
\begin{equation}
  \label{eq:2}
  \tfrac{\sqrt{2n+1}}{2}v_{jk} := \sin (k-\half)\theta_j,\qquad j,k = 1,2,\ldots,n.
\end{equation}
With $V=[v_{jk}]$ and noting $V^{-1}=V^T$, we obtain  $V^T(LL^T)^{-1}V=\Diag(\lambda_k^{-1})$.

\noindent Hence, it immediately follows that $\norm{M} = \lambda_{\max}(LL^T)=(2+2\cos\theta_n)^{-1} \approx 4n^2/\pi^2$.
\end{proof}

The eigendecomposition of $M$ derived above is no surprise; $M=[\min(i,j)]$ is essentially the \emph{Brownian bridge covariance} function whose spectrum is well-studied~\citep{hofman06}. But it is worth noting that our derivation uses elementary linear algebra, compared with a more advanced Fourier-analytic derivation typically employed when studying eigenfunctions of kernels.

\begin{rmk}
  \label{rmk:one}
  Using $\norm{K}=\norm{DMD} \le \norm{D}^2\norm{M}\le \norm{M} \sim 4n^2/\pi^2$, we get a very pessimistic bound on $\norm{K}$. Remark~\ref{rmk:two} analyzes $\norm{K}$ directly, yielding a much better dependence on $n$. Finally, Proposition~\ref{prop:svals} actually provides a dimension independent bound on $\norm{K}$. %that $\norm{K}=\norm{P}^2 \le 4$. %However, if we use $M=\inv{D}K\inv{D}$ to bound $\norm{M}\le \norm{\inv{D}}^2\norm{K} \le 4n^2$, the bound is equally bad (its off by a factor of $n$).
\end{rmk}
\begin{rmk}
  \label{rmk:two}
  The bound from Remark~\ref{rmk:one} can be greatly improved rather easily. Indeed,
\begin{equation}
  \label{eq:8}
  \norm{K}=\norm{PP^T}=\lambda_{\max}(PP^T) \le \trace(PP^T) = \nlsum_i \tfrac1i = H_n,
\end{equation}
where $H_n$ denotes the $n$-th Harmonic number. However, even bound~\eqref{eq:8} is suboptimal as it depends on the dimension of the matrix $P$. %Prop.~\ref{prop:svals} actually establishes a dimension-independent bound.
\end{rmk}

\begin{prop}
  \label{prop:svals}
  Let $K=[1/\max(i,j)]$ for $1 \le i,j \le n$. Then, $\norm{K} \le 4$, while $\norm{K^{-1}} \le 4n^2$.
\end{prop}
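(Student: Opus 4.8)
The plan is to reduce both estimates to facts already assembled in Section~\ref{sec.kernels}. Recall $K=DMD$ with $D=\Diag([1/i]_{i=1}^n)$ and $M=[\min(i,j)]=LL^T$, where $L$ is the all-ones lower-triangular matrix. The key observation is that $DL$ is exactly the finite Cesar\'o matrix $C$ from the introduction, since $(DL)_{ij}=1/i$ for $j\le i$. Therefore
\[
  K=DMD=(DL)(DL)^T=CC^T,
\]
where we also used $\min(i,j)/(ij)=1/\max(i,j)$. Consequently $\norm{K}=\norm{C}^2$, so the first bound is equivalent to $\norm{C}\le 2$.

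The statement $\norm{C}\le 2$ is the finite form of Hardy's inequality with its sharp constant $4$. I would give the standard short proof: for $a\ge 0$ put $b_k:=(Ca)_k=\tfrac1k\sum_{j\le k}a_j$, so that $kb_k=(k-1)b_{k-1}+a_k$; combining $a_k=kb_k-(k-1)b_{k-1}$ with $2b_{k-1}b_k\le b_{k-1}^2+b_k^2$ yields $b_k(b_k-2a_k)\le (k-1)b_{k-1}^2-kb_k^2$, and summing over $k$ telescopes to $\sum_k b_k^2\le 2\sum_k a_kb_k\le 2\norm{b}\,\norm{a}$, hence $\norm{Ca}\le 2\norm{a}$; the sign-indefinite case follows from $|(Ca)_k|\le(C|a|)_k$. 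This gives $\norm{K}\le 4$. (An alternative that bypasses the Cesar\'o factorization is Schur's test applied directly to $K$ with weights $p_j=j^{-1/2}$, using $\sum_{j\le i}j^{-1/2}\le 2\sqrt i$ and $\sum_{j>i}j^{-3/2}\le 2/\sqrt i$ to obtain $\sum_j p_j/\max(i,j)\le 4p_i$.)

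For the second bound, write $K^{-1}=D^{-1}M^{-1}D^{-1}$, so $\norm{K^{-1}}\le\norm{D^{-1}}^2\norm{M^{-1}}=n^2\norm{M^{-1}}$ because $\norm{D^{-1}}=n$. It then remains only to note $\norm{M^{-1}}\le 4$: by Proposition~\ref{prop:lleig} the eigenvalues of $M^{-1}$ are $2+2\cos\theta_k\in(0,4)$, or, more cheaply, Gershgorin applied to the tridiagonal matrix~\eqref{eq:1} confines its spectrum to $[0,4]$. Hence $\norm{K^{-1}}\le 4n^2$. The only nonroutine ingredient is the sharp constant $4$ in the first bound---a crude row-sum estimate would lose it---so the main (and fairly modest) obstacle is carrying out the Hardy telescoping (or the two tail-sum estimates in the Schur-test variant) carefully; everything else is bookkeeping with the factorizations $K=CC^T$ and $K^{-1}=D^{-1}M^{-1}D^{-1}$.
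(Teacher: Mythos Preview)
Your argument is correct, but it proceeds quite differently from the paper's. For the bound $\norm{K}\le 4$, the paper works with the anti-triangular $P$ from~\eqref{eq:4}: setting $Z=JP$, it observes the exact identity $(I-Z)(I-Z^T)=\Diag[(n-j)/(n-j+1)]$, reads off $\norm{I-Z}=\sqrt{(n-1)/n}$, and concludes via the triangle inequality that $\norm{K}=\norm{Z}^2\le(1+\sqrt{1-1/n})^2\le 4$. You instead recognise $K=CC^T$ (equivalently $P=CJ$) and invoke the finite Hardy inequality $\norm{C}\le 2$ with its telescoping proof, or alternatively the Schur test. Both routes reach the sharp constant~$4$; the paper's diagonal identity is slicker and yields the slightly sharper $(1+\sqrt{1-1/n})^2$, while your version has the merit of identifying the bound as a classical named inequality. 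For $\norm{K^{-1}}\le 4n^2$, the paper applies Gershgorin directly to the tridiagonal matrix $P^{-1}P^{-T}$, obtaining $4n^2-6n+3$; your factorisation $K^{-1}=D^{-1}M^{-1}D^{-1}$ together with $\norm{M^{-1}}\le 4$ from Proposition~\ref{prop:lleig} (or Gershgorin on~\eqref{eq:1}) is more economical in that it recycles earlier work, at the cost of the slightly looser intermediate constant.
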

\begin{proof}
  Since $K=PP^T$, we analyze $\norm{K}=\lambda_{\max}(PP^T)$, where $P$ is defined by~\eqref{eq:4}. 

  Let $Z=JP$. Then, $\norm{K}=\lambda_{\max}(JPP^TJ^{-1})=\lambda_{\max}(JP(JP)^T) = \norm{Z}^2$. A quick calculation show that $(I-Z)(I-Z^T) = \Diag[(n-j)/(n-j+1)]_{j=1}^n$. Thus,
  \begin{equation*}
    \norm{I-Z}^2 = \norm{(I-Z)(I-Z^T)} = \frac{n-1}{n}.
  \end{equation*}
  Thus, $\norm{I-Z} \le \sqrt{(n-1)/n}$, so by the triangle-inequality we obtain $\norm{Z} \le 1+\sqrt{(n-1)/n}$. But $Z=JP$ and the operator norm is unitarily invariant, hence $\norm{Z}=\norm{P} \le 1+\sqrt{(n-1)/n}$. Thus, $\norm{K}=\norm{P}^2 \le (1+\sqrt{1-1/n})^2 \le 4$.

  To compute $\norm{K^{-1}}$, consider 
  \begin{equation*}
    W := P^{-1}P^{-T} ={\footnotesize
    \begin{pmatrix}
      2(n-1)n+1 & -(n-1)^2 &\\
      -(n-1)^2  & 2(n-2)(n-1)+1 & -(n-2)^2 &\\
      &\hskip12pt\ddots&\\
      \\
      &&-4&5&-1\\
      &&&-1 & 1
    \end{pmatrix}},
  \end{equation*}
  which is a highly structured tridiagonal matrix. Thus, using Gershgorin's theorem, we obtain
  \begin{equation*}
    \lambda(W) \le |\lambda(W)-w_{ii}|+w_{ii} \le 2(n-i)^2+2(n-i)(n-i+1)+1,\qquad 1\le i \le n.
  \end{equation*}
  Thus, $\lambda_{\max}(W) \le 4n^2-6n+3 \le 4n^2$. Since $\norm{K^{-1}}=\norm{W}$, the proof is complete.
\end{proof}

\subsubsection*{Acknowledgments}
I would like to thank David Speyer, whose comment to an initial answer of mine on MathOverflow prodded me to think more carefully about the problem, which ultimately led to this paper.

\bibliographystyle{abbrvnat}
%\bibliography{markov}

\end{document}